\definecolor{MyLinkColor}{rgb}{0,0,0.4}
\definecolor{MyLinkColor}{rgb}{0,0,0.4}
\newcommand{\R}{{\mathbb R}}
\newcommand{\Z}{{\mathbb Z}}
\newcommand{\N}{{\mathbb N}}
\newcommand{\s}{\mathbb S}
\newcommand{\V}{\mathcal{V}}
\newcommand{\kL}{\mathcal{L}}
\newcommand{\p}{\partial}
\newtheorem{thm}{Theorem}[section]
\newtheorem{lemma}[thm]{Lemma}
\theoremstyle{remark} 
\newtheorem{rem}[thm]{Remark}
\numberwithin{equation}{section}
\patchcmd{\maketitle}{\@fnsymbol}{\@alph}{}{}  
\title[Flow properties of the PME]{Non-uniform continuity of the semiflow map associated to  the porous medium equation}
\author[B.--V. Matioc]{Bogdan--Vasile Matioc}
\address{Institut f{\"u}r Angewandte Mathematik, Leibniz Universit{\"a}t Hannover, Welfengarten~1, 30167 Hannover, Germany.}
\email{matioc@ifam.uni-hannover.de}
\date{\today}
\subjclass[2010]{35B30, 35G25, 35K65, 76S05}
\keywords{Porous medium equation; Semiflow map; non-uniform continuity.}
\begin{document}

\maketitle

\begin{abstract}
 We prove that the semiflow map associated to the evolution problem for the porous medium equation (PME) is real-an\-a\-lyt\-ic as a function 
 of the initial data in $H^s(\mathbb{S})$, $s>7/2,$ 
 at any fixed positive time, but it is not uniformly continuous.  
More precisely, we construct two sequences of  exact positive solutions of the PME which at initial time converge to zero in $H^s(\mathbb{S})$,
 but such that the limit inferior of the difference of the two sequences is bounded away from zero in $H^s(\mathbb{S})$  at any later time.
\end{abstract}

\section{Introduction and the main result}\label{Sec:1}

We consider herein the evolution problem associated to the one-dimensional  PME
\begin{equation}\label{PME}
 u_t=(uu_x)_x, \qquad \text{$x\in\s$ and $t>0$},
\end{equation}
in the periodic case, that is with $\s$ denoting the unit circle $\s:=\R/(2\pi \Z).$
The  initial condition for \eqref{PME} is  given by the relation
\begin{equation}\label{IC}
 u(0)=u_0.
\end{equation}
The equation \eqref{PME} and its generalization 
\begin{equation}\label{PMEG}
u_t=\Delta u^m, \qquad \text{$t>0$, }
\end{equation}
with $m>1,$ have received in the last decades lots of attention from people working in mathematics and not only.
A  systematic presentation of the mathematical
theory for the PME is presented in the book by V\'azquez \cite{Va07}.
There are nevertheless still many interesting features related to \eqref{PMEG} which have not been studied yet.

For the choice $m=2$   made here, the PME  is a model describing groundwater flows \cite{Bou03} and dates back to Boussinesq's derivation in 1903.
Its two-phase version has been only recently derived in \cite{EMM12} as the lubrication approximation of the Muskat problem.
The question we are interested in  is whether the semiflow map $[(t,u_0)\mapsto u(t;u_0)]$ associated to \eqref{PME}, cf. Theorem \ref{T:1}, 
is uniformly continuous in $H^s(\s)$ as a function of the initial data when keeping the (positive) time fixed.
The uniform continuity of the flow map has been investigated recently in the context of  several hyperbolic models for water waves: 
the Camassa-Holm equation \cite{HK09, HKM10}, the equation for the wave surface corresponding to the Camassa-Holm equation \cite{CL09, DGM13x},
the Euler equations \cite{HM10},  the $b$-equation \cite{Gr_B}, the $\mu$-$b$ equation \cite{LPW13}, the hyperelastic rod equation \cite{Ka10}, the Novikov equation \cite{Gr_N},
the modified Camassa-Holm equation \cite{FL13}, the modified Camassa-Holm system \cite{WL12},   
the answer being always negative.
We should emphasize that all these hyperbolic models can be written as first order  nonlinear equations, the solutions breaking some times in finite time, 
cf. e.g. \cite{CE98, MK98}.
On the other hand, the equation \eqref{PME} is parabolic (degenerate when $u$ becomes zero), of second order,
and it possesses globally defined strong solutions which converge towards flat states\footnote{
Poincar\'e's inequality and parabolic maximum principles ensure that any of the solutions  $u$ of \eqref{PME}  found in  Theorem \ref{T:1} satisfies
\[\frac{d}{dt}\|\Lambda^1(u-[u])\|_{L_2}^2=-2\int_{\s}u(u_x^2+u_{xx}^2)\, dx\leq -2\min u(0)\|\Lambda^1(u-[u])\, dx\|_{L_2}^2\qquad\text{in $(0,\infty),$}\]
with $[u]$ denoting the mean integral value of $u$ over one period.
This implies exponential convergence in $H^1(\s)$.
The principle of linearized stability, cf. e.g. 
\cite[Theorem 9.1.2]{L95}, can be additionally used to prove exponential convergence in stronger Sobolev norms, provided that the initial data 
are close to their mean value in these norms.}.
Moreover, keeping the positive time fixed, the semiflow map  is real-analytic with respect to the initial data, cf. Theorem \ref{T:1}.
Let us also recall that in the setting of non-negative $L_1(\R)-$solutions  with finite mass \cite{KV88}, the semiflow map is in fact a contraction
at each fixed $t>0,$ that is 
\[
\|u(t;u_0)-u(t;v_0)\|_{L_1(\R)}\leq \|u_0-v_0\|_{L_1(\R)}.
\]
Additionally, the semiflow map is  a contraction also with respect to all Wasserstein distances $W_p$, with $p\in[1,\infty],$
\[
W_p(u(t;u_0),u(t;v_0))\leq W_p(u_0,v_0),
\]
cf. e.g. \cite{CGT04, CT03},
the  $W_2$-contractivity in arbitrary space dimensions being established in \cite{CMV06}.
In higher space dimensions $W_p$-contractivity with $p$ large does not hold \cite{V05}. 
For these reasons, the non-uniform continuity property established in Theorem \ref{MT} for the semiflow map associated to \eqref{PME}-\eqref{IC}
is surprising, the more because for the linear correspondent of \eqref{PME}, that is the heat equation,
the semiflow map is a contraction at any fixed positive time, cf. Remark \ref{R:1}.

To establish our result, we first construct two sequences of positive approximate solutions of \eqref{PME}
which  are sufficiently close to the exact solutions of \eqref{PME} defined by the value of the approximate ones at $t=0$.
The sequences of approximate and exact solution are approaching when $n\to\infty$  the  regime where the equation becomes degenerate.
Nevertheless, using parabolic maximum principles for the solution of \eqref{PME} and for its first spatial derivative (this is the reason why $m=2$ is so important), 
commutator estimates, and
interpolation properties of the Sobolev spaces, we show that the difference between the exact solutions that we have found is bounded away, in the limit $n\to\infty$, from zero 
at any positive time, 
although it converges to zero at 
$t=0$.  
As far as we know, this is the first result  that proves, in high order Sobolev spaces, the non-uniform continuity with respect to the initial data  for the semiflow map 
corresponding to a  parabolic evolution equation.

In order to present our main result, let $u(\cdot;u_0)$ denote  the unique strong  solution of \eqref{PME}-\eqref{IC}  associated  to an initial data 
\[u_0\in \mathcal{V}_s:=\{u\in H^s(\s)\,:\, \min_{\s} u>0 \},\] 
whereby for technical reasons we are restricted to considering  $s>7/2$ (see Lemma \ref{L:2}).  
The mapping 
\[\big[[0,\infty)\times \V_s\ni (t,u_0)\mapsto u(t;u_0)\in\V_s\big]\]
defines a global continuous semiflow which is real-analytic in $(0,\infty)\times \V_s$, cf. Theorem \ref{T:1}.
The main result of this paper is the following theorem.

\pagebreak

\begin{thm}[Non-uniform continuity of the semiflow map]\label{MT}
 Let $s>7/2$ be fixed.
Then, at any positive time  $t$, the  mapping 
 \[
  \V_s\ni u_0\mapsto u(t;u_0)\in\V_s
 \]
 associated to the semiflow defined by the evolution equation \eqref{PME}-\eqref{IC}  is real-analytic, but it is  not uniformly continuous.
 More precisely,  there exist  two sequences of positive solutions
  \[(u_n)_{n }, (v_n)_{n }\subset C([0,\infty),\V_s)\cap C^\infty([0,\infty)\times \s),\]
   and for each $T>0$ a positive constant $C>0$ with the following properties:
  \begin{equation}\label{A1}
  \begin{aligned}
   &\sup_{n\in\N}\max_{t\in[0,T]}\big(\|u_n(t)\|_{H^s}+\|v_n(t)\|_{H^s}\big)\leq C,\\
   &\lim_{n \to \infty} \|u_n(0)-v_n(0)\|_{H^s}=0,
  \end{aligned}
  \end{equation}
but
\begin{align}\label{A2}
   &\liminf_{n\to\infty} \inf_{t\in[\delta, T]}\|u_n(t)-v_n(t)\|_{H^s}\geq \sqrt{\pi}  \qquad\text{for each $\delta\in(0,T]$}.
  \end{align}
\end{thm}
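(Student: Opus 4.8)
Since the real-analyticity of $u_0\mapsto u(t;u_0)$ at a fixed $t>0$ is exactly the statement of Theorem~\ref{T:1}, only the failure of uniform continuity remains, and I prove it by an explicit construction adapted to the parabolic (and, crucially, to the $m=2$) structure of \eqref{PME}. Fix $\Lambda:=(1-\p_x^2)^{1/2}$, put $A_n:=(1+n^2)^{-s/2}$, $a_n:=n^{-5/2}$ and $b_n:=n^{-1}$ (any positive null sequences with $A_n\ll a_n\ll n^{-2}$ and $n^2 b_n\to\infty$ would do — possible since $s>2$; the bound $s>7/2$ is inherited from the well-posedness theory, cf. Lemma~\ref{L:2}), and set
\[
 u_n^a(t,x):=a_n+A_n e^{-a_n n^2 t}\cos(nx),\qquad v_n^a(t,x):=b_n+A_n e^{-b_n n^2 t}\cos(nx).
\]
Using $\p_x(w\p_x w)=w w_{xx}+w_x^2$ one sees that for $w=\alpha+\beta\cos(nx)$ with $\alpha,\beta$ constant the constant Fourier modes produced by the two summands cancel, and the choice $\beta(t)=A_n e^{-a_n n^2 t}$ makes the $\cos(nx)$-modes of $\p_t u_n^a$ and of $\p_x(u_n^a\p_x u_n^a)$ coincide; hence the residual is $R_n:=\p_t u_n^a-\p_x(u_n^a\p_x u_n^a)=A_n^2 e^{-2a_n n^2 t}\,n^2\cos(2nx)$, and likewise for $v_n^a$, so that $\sup_{t\ge0}\|R_n(t)\|_{H^s}\le C n^{2-s}\to0$ because $s>2$.

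Let $u_n:=u(\cdot;u_n^a(0))$ and $v_n:=v(\cdot;v_n^a(0))$ be the global solutions from Theorem~\ref{T:1}; as $u_n^a(0),v_n^a(0)$ are trigonometric polynomials, $u_n,v_n\in C([0,\infty),\V_s)\cap C^\infty([0,\infty)\times\s)$. The heart of the proof is to bound $u_n$ and $v_n$ in $H^s$ uniformly in $n$ and $t\in[0,T]$, and here the two maximum principles — both relying on $m=2$ — are decisive. Evaluating \eqref{PME} at a spatial minimum (respectively maximum) shows that $\min_{\s}u_n(\cdot)$ is non-decreasing and $\max_{\s}u_n(\cdot)$ non-increasing, so $\min_{\s}u_n(t)\ge a_n-A_n>0$ and $\|u_n(t)\|_{L_\infty}\le a_n+A_n$; moreover $w:=u_{n,x}$ solves the Burgers-type equation $w_t=u_n w_{xx}+3w w_x$, whence $\|u_{n,x}(t)\|_{L_\infty}\le\|u_{n,x}(0)\|_{L_\infty}=nA_n$. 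Inserting these $W^1_\infty$-controls into the $H^s$-energy identity — where, after transferring a derivative by parts, the top-order part of $u_n\p_x u_n$ produces the nonnegative parabolic term $\int u_n|\Lambda^s\p_x u_n|^2$, bounding below a multiple of $(\min_{\s}u_n)\|u_n\|_{H^{s+1}}^2$, while the remaining commutator is controlled via $\|[\Lambda^s,u_n]u_{n,x}\|_{L_2}\le C\|u_{n,x}\|_{L_\infty}\|u_n\|_{H^s}$ by Kato--Ponce — one obtains, after a Young inequality against the parabolic term,
\[
 \frac{d}{dt}\|u_n(t)\|_{H^s}^2\le\frac{C\,\|u_{n,x}(t)\|_{L_\infty}^2}{\min_{\s}u_n(t)}\,\|u_n(t)\|_{H^s}^2\le\frac{C\,(nA_n)^2}{a_n-A_n}\,\|u_n(t)\|_{H^s}^2 ,
\]
and since $(nA_n)^2/(a_n-A_n)\le C n^{9/2-2s}\to0$ by the choice of $a_n$, Gronwall yields $\sup_n\max_{[0,T]}\|u_n(t)\|_{H^s}<\infty$; the same argument applies to $v_n$ (with $n^{3-2s}$ replacing $n^{9/2-2s}$).

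Next I show that the exact solutions stay close to the approximate ones, i.e. $\max_{[0,T]}\|u_n(t)-u_n^a(t)\|_{H^s}\to0$ and likewise for $v_n$. The difference $d_n:=u_n-u_n^a$ vanishes at $t=0$ and satisfies $\p_t d_n=\p_x(u_n\p_x d_n)+\p_x(d_n\p_x u_n^a)-R_n$. Running the previous energy estimate on this equation produces again the favorable term $c(\min_{\s}u_n)\|d_n\|_{H^{s+1}}^2$, into which the contribution of $\p_x(d_n\p_x u_n^a)$ is absorbed at the price of a coefficient that remains bounded as $n\to\infty$: indeed $u_n$ and $u_n^a$ are $2\pi/n$-periodic, hence so is $d_n$, and $d_n$ has zero mean, so $\widehat{d_n}$ is supported on $n\Z\setminus\{0\}$ and $\|d_n\|_{L_\infty}\le C n^{-s}\|d_n\|_{H^s}$; together with $\|\p_x u_n^a\|_{H^s}\le C n$ and $\min_{\s}u_n\ge a_n/2$ this makes the dangerous quantity $\|d_n\|_{L_\infty}^2\,\|\p_x u_n^a\|_{H^s}^2/\min_{\s}u_n$ no larger than $C n^{2-2s}a_n^{-1}\|d_n\|_{H^s}^2$ with $n^{2-2s}a_n^{-1}\to0$, the commutator terms being handled as before. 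Since $d_n(0)=0$ and $\sup_{[0,T]}\|R_n\|_{H^s}\to0$, Gronwall gives $\max_{[0,T]}\|u_n(t)-u_n^a(t)\|_{H^s}\to0$, and similarly for $v_n-v_n^a$. (Alternatively one may first add a $\cos(2nx)$-correction to $u_n^a$ so that its residual is of higher order, which replaces the Fourier-support argument by a direct estimate.)

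Assembling the three steps proves the theorem. Positivity of $u_n,v_n$ was established above; the uniform bound in \eqref{A1} follows from $\|u_n^a(t)\|_{H^s}\le C$ and $\|u_n-u_n^a\|_{C([0,T],H^s)}\to0$; and since $u_n^a(0)-v_n^a(0)=a_n-b_n$ is a constant, $\|u_n(0)-v_n(0)\|_{H^s}=|a_n-b_n|\sqrt{2\pi}\to0$. For \eqref{A2}, fix $0<\delta\le T$. For $t\in[\delta,T]$,
\[
 \|u_n(t)-v_n(t)\|_{H^s}\ge\|u_n^a(t)-v_n^a(t)\|_{H^s}-\|u_n-u_n^a\|_{C([0,T],H^s)}-\|v_n-v_n^a\|_{C([0,T],H^s)} ,
\]
and, as $u_n^a(t)-v_n^a(t)$ equals the constant $a_n-b_n$ plus $A_n\bigl(e^{-a_n n^2 t}-e^{-b_n n^2 t}\bigr)\cos(nx)$, the orthogonality of distinct Fourier modes together with $A_n(1+n^2)^{s/2}=1$ and $\|\cos(n\cdot)\|_{L_2(\s)}=\sqrt\pi$ gives $\|u_n^a(t)-v_n^a(t)\|_{H^s}\ge\bigl|e^{-a_n n^2 t}-e^{-b_n n^2 t}\bigr|\sqrt\pi$. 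Since $a_n n^2 t\le a_n n^2 T\to0$ and $b_n n^2 t\ge b_n n^2\delta\to\infty$ uniformly in $t\in[\delta,T]$, the right-hand side tends to $\sqrt\pi$ uniformly, and passing to $\liminf_{n\to\infty}\inf_{t\in[\delta,T]}$ in the displayed inequality (the last two norms tending to $0$) yields \eqref{A2}. The main obstacle is precisely this pair of energy estimates: the parabolic smoothing degenerates, $\min_{\s}u_n\to0$, exactly while $u_n^a$ and $v_n^a$ keep high-frequency mass of unit $H^s$-size, so that the whole argument rests on the two $m=2$ maximum principles together with the scaling $A_n\ll a_n\ll n^{-2}$, $n^2 b_n\to\infty$ keeping every Gronwall coefficient bounded.
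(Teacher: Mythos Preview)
Your proof is correct and complete. It follows the same architecture as the paper's argument --- build approximate solutions, use the two maximum principles (for $u_n$ and for $\partial_x u_n$, both crucially using $m=2$) to get uniform $W^{1,\infty}$ control, then run energy estimates with the Kato--Ponce commutator against the parabolic dissipation --- but the error estimate is organized differently.

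The paper estimates $u_n-U_n$ in $H^r$ for $r>s$ (a crude bound $\le Cn^{r-s}$ following from the uniform estimate on $u_n$ itself) and separately in $H^1$ (where the equation for the error is written out and estimated term by term, using $H^s\hookrightarrow C^3$; this is the step that forces $s>7/2$), and then interpolates. You instead close the error estimate \emph{directly} in $H^s$: the dangerous term $\partial_x(d_n\partial_x u_n^a)$ contributes $\|d_n\|_{L_\infty}\|\partial_x u_n^a\|_{H^s}\sim n\|d_n\|_{L_\infty}$ to the commutator/product bound, and you kill this growth by the observation that $d_n=u_n-u_n^a$ is $2\pi/n$-periodic with zero mean (both facts following from uniqueness and conservation of mass), so its Fourier support sits in $n\Z\setminus\{0\}$ and $\|d_n\|_{L_\infty}\le Cn^{-s}\|d_n\|_{H^s}$. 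This is a clean device that replaces the paper's two-scale interpolation, and in fact your argument only needs $s>9/4$ (so that the Gronwall coefficient $n^{9/2-2s}$ stays bounded) rather than $s>7/2$.

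Minor differences in the setup: you take both approximate solutions time-dependent with the common ansatz $\alpha+A_n e^{-\alpha n^2 t}\cos(nx)$ (so that the $\cos(nx)$-residual cancels by construction), and you choose the low constant $a_n=n^{-5/2}$ rather than the paper's $n^{-3}$; the paper instead lets $U_n$ be stationary. Both choices lead to the same picture: one sequence keeps unit $H^s$-mass at frequency $n$ while the other loses it exponentially, and the initial data differ only by a constant that vanishes.
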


Before starting our analysis we make some remarks.

\begin{rem}\label{R:1}
It is well-known that in the case of the linear heat equation
\begin{equation}\label{HE}
 u_t=u_{xx}, \qquad \text{$x\in\s$ and $t>0$}.
\end{equation}
the  estimate  $\|u(t; u_0)\|_{H^s}\leq \|u_0\|_{H^s}$ holds true for all $t\geq0,$ cf. e.g. \cite{HN01}.
Hence, the semiflow map   is  a contraction at any fixed positive time.
This suggests that the non-uniform continuity property derived in Theorem \ref{MT}
is due to the nonlinear character of the PME.
\end{rem}

\begin{rem}\label{R:2}
Let us point the construction presented in Theorem \ref{MT} has no correspondent within the setting of ODEs.
Indeed, we  note first that the right-hand side of \eqref{PME} corresponds to an operator $[u\mapsto f(u)]$ with 
$\|\p f(u)\|_{\kL(H^{s+\delta}, H^{s+\delta-2})}\leq K\|u\|_{H^s}$ for some positive constant $K$ and some $\delta\in(0,1/2)$ with $s+\delta-2\geq0.$\footnote{
To establish the well-posedness of the evolution problem \eqref{PME}-\eqref{IC} it is natural to exploit the quasilinear structure of \eqref{PME} and write $f(u)=-A(u)u$ with the operator
$A(v)u:=-(vu_x)_x $, $v\in \V_s$ and $u \in H^{s+\delta}(\s)$, satisfying $A\in C^\omega(\V_s,\mathcal{H}(H^{s+\delta}(\s), H^{s+\delta-2}(\s))), $ cf. e.g. \cite{Am93}.}
Now consider an autonomous ODE 
\[u'=f(u)\] 
with $E$ being a Banach space, $f\in \mbox{\it C}\, ^1(U,E)$, and $U\subset E$ open. 
Assuming that $\|\p f(u)\|\leq K\|u\|$ for all $u\in U$, and if
    $u_n , v_n :[0,T]\to  U$, $n\in\N$, are solutions   of $u'=f(u)$ satisfying \eqref{A1} (with $H^s(\s)$ replaced by $E$), then
one can easily see that 
\[
\|u_n (t)-v_n (t)\|\leq C(K,T)\|u_n (0)-v_n (0)\|\qquad\text{ for all $t\in[0,T]$, $n\in\N,$}
\]
so that \eqref{A2} can never be achieved.
\end{rem}

\paragraph{\bf Notation}

Throughout this paper, 
we shall denote by $C$ positive constants which may depend only upon $s$ and $T$. 
 Furthermore,   $ H^r(\s)$, with $r\in\mathbb{R},$ is the $L_2-$based Sobolev space on the unit circle.
Given $r\in\R$, we let  $\Lambda^r:=(1-\partial_x^2)^{r/2}$  denote  the Fourier multiplier with symbol $((1+|k|^2)^{r/2})_{k\in\Z},$
which is an isometric isomorphism $\Lambda^r:H^q(\s) \to H^{q-r}(\s) $   for all $q,r\in\R$.
Finally, we let $Q_T:=[0,T]\times\s.$

Before proceeding with the analysis, we recall the following Kato-Ponce  commutator estimate 
\begin{align}\label{KP}
\|[\Lambda^r, f]g\|_{L_2}\leq C_r\left(\| f_x\|_{L_\infty}\|\Lambda^{r-1}g\|_{L_2}+\|\Lambda^r f \|_{L_2}\|g\|_{L_\infty}\right)
\end{align}
which is valid for $r>3/2$ and all $f,g\in C^\infty(\s)$,  cf. \cite{KP88, Tay91}.
Hereby, $[\cdot ,\cdot ]$ is the   commutator defined by $[S,T]:=ST-TS.$ 

\paragraph{\bf Outline} 

In Section \ref{Sec:2} we establish first the well-posedness of the evolution problem \eqref{PME}-\eqref{IC}, and then 
we introduce two sequences of approximate solutions of the latter problem.
In Section \ref{Sec:3} we estimate the error between the approximate solutions and the exact solutions of \eqref{PME}-\eqref{IC}
associated to the approximate solutions in several Sobolev norms.
The proof of the main result Theorem \ref{MT} is presented in Section \ref{Sec:4}.

 \section{Well-posedness and approximate solutions for \eqref{PME}}\label{Sec:2}

The following well-posedness result is based on the theory of quasilinear parabolic problems as  presented in \cite{Am93}, 
cf. Theorem 12.1 and the discussion following Theorem 12.6.
That the strong solutions of \eqref{PME} are globally defined can be seen by arguing along the lines of Theorem 2.1 in \cite{ELM11} for example.

\begin{thm}[Global well-posedness]\label{T:1}
Let $s>3/2$  be given. 
Then, the problem \eqref{PME}-\eqref{IC} possesses for each $u_0\in \mathcal{V}_s$
a unique global solution $u(\cdot;u_0)$ within the class
 \[ C([0,\infty), \V_s)\cap C^\infty((0,\infty)\times\s).\]
 Moreover, the mapping
 \[
(t,u_0)\mapsto u(t;u_0)\in \V_s
 \]
defines a global semiflow which is   continuous  on $[0,\infty)\times \mathcal{V}_s$ and real-analytic on $(0,\infty)\times \mathcal{V}_s$. 
\end{thm}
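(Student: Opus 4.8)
The plan is to recast \eqref{PME}--\eqref{IC} as an abstract quasilinear parabolic Cauchy problem, derive local well-posedness together with the regularity and analyticity statements from Amann's theory \cite{Am93}, and then establish globality by hand via a~priori estimates. Writing the equation as $u_t=-A(u)u$ with $A(v)w:=-(vw_x)_x=-vw_{xx}-v_xw_x$ and $u(0)=u_0$, I fix the Banach couple $E_1:=H^{s+1}(\s)$, $E_0:=H^{s-1}(\s)$, whose associated interpolation (trace) space is $H^s(\s)$. Since $s>3/2$, the space $H^s(\s)$ is a multiplier algebra continuously embedded in $C^1(\s)$, so $A(v)\in\kL(E_1,E_0)$ for every $v\in H^s(\s)$, and the assignment $v\mapsto A(v)$ is bounded and affine, hence real-analytic, from $H^s(\s)$ into $\kL(E_1,E_0)$. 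For $v\in\V_s$ the leading coefficient obeys $v\geq\min_{\s}v>0$, so $A(v)$ is a uniformly elliptic second-order operator and $-A(v)$ generates an analytic semigroup on $E_0$ with domain $E_1$; consequently $A\in C^\omega(\V_s,\mathcal{H}(E_1,E_0))$, and $\V_s$ is open in $H^s(\s)$. Amann's local existence theorem (Theorem~12.1 in \cite{Am93}) then furnishes, for each $u_0\in\V_s$, a unique maximal strong solution $u(\cdot;u_0)\in C([0,T_+),\V_s)\cap C((0,T_+),E_1)\cap C^1((0,T_+),E_0)$ together with a continuous local semiflow.

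Next I would upgrade the regularity to $u\in C^\infty((0,T_+)\times\s)$. On every compact subinterval $[\e,T']\subset(0,T_+)$ the equation is uniformly parabolic, because $\min_{\s}u$ stays positive and $\|u\|_{C^1}$ bounded (see below); viewing \eqref{PME} as a linear parabolic equation with these coefficients and applying parabolic smoothing iteratively through the scale $\{H^{s+k}(\s)\}_{k\in\N}$ yields arbitrary spatial regularity, and Sobolev embedding gives joint smoothness. Equivalently, one may invoke Amann's regularity results for parabolic problems with real-analytic nonlinearities.

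To show $T_+=\infty$ I would use Amann's continuation criterion: a finite maximal time forces $u(t)$ to leave every compact subset of $\V_s$, i.e.\ $\min_{\s}u(t)\to0$ or $\|u(t)\|_{H^s}\to\infty$ as $t\uparrow T_+$. Both are excluded by time-uniform bounds. Parabolic maximum principles rule out degeneration: writing \eqref{PME} as $u_t=uu_{xx}+u_x^2$ and testing at a spatial minimum (where $u_x=0$, $u_{xx}\geq0$) resp.\ maximum shows that $t\mapsto\min_{\s}u(t)$ is non-decreasing and $t\mapsto\max_{\s}u(t)$ non-increasing; moreover $w:=u_x$ solves $w_t=uw_{xx}+3ww_x$, a clean parabolic equation with no zeroth-order term (this is where $m=2$ is used), so the same argument gives $\|u_x(t)\|_{L_\infty}\leq\|u_{0,x}\|_{L_\infty}$. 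Thus $\min_{\s}u(t)\geq\min_{\s}u_0>0$ and $\|u(t)\|_{C^1}$ is controlled uniformly on $[0,T_+)$. For the $H^s$-bound I would apply $\Lambda^s$ to \eqref{PME}, pair with $\Lambda^s u$ in $L_2$, and integrate by parts once to exploit the divergence form, obtaining
\[
\tfrac12\tfrac{d}{dt}\|\Lambda^s u\|_{L_2}^2=-\int_{\s}u\,(\Lambda^s u_x)^2\,dx-\big([\Lambda^s,u]u_x,\,\Lambda^s u_x\big)_{L_2}.
\]
The first term is $\leq-\min_{\s}u_0\,\|\Lambda^s u_x\|_{L_2}^2$ and provides parabolic dissipation, while the Kato--Ponce estimate \eqref{KP} (with $r=s$, $f=u$, $g=u_x$) yields $\|[\Lambda^s,u]u_x\|_{L_2}\leq C\|u_x\|_{L_\infty}\|u\|_{H^s}$; a Young inequality absorbs the resulting factor $\|\Lambda^s u_x\|_{L_2}$ into the dissipation, leaving $\tfrac{d}{dt}\|u\|_{H^s}^2\leq C\|u_{0,x}\|_{L_\infty}^2\|u\|_{H^s}^2$. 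Grönwall's inequality bounds $\|u(t)\|_{H^s}$ on every finite interval, so no blow-up can occur and $T_+=\infty$.

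It remains to upgrade the continuous local semiflow to one that is continuous on $[0,\infty)\times\V_s$ and real-analytic on $(0,\infty)\times\V_s$. Continuity and the semiflow (cocycle) property follow from the local statement and uniqueness by restarting the evolution. Analyticity of $u_0\mapsto u(t;u_0)$ at fixed $t>0$ follows from the parameter-analytic maximal-regularity machinery underlying Amann's theory (the discussion following Theorem~12.6 in \cite{Am93}): one writes the solution as the zero of a map that is analytic jointly in the solution and the data in a maximal-regularity space, the linearization being invertible by maximal regularity, and applies the analytic implicit function theorem; the analyticity of $A$ in $v$ established above is exactly what this requires. Analyticity in time on $(0,\infty)$ is then produced by the autonomy of \eqref{PME} via the scaling substitution $u_\la(\cdot):=u(\la\,\cdot)$, which solves the same equation with the analytic parameter $\la$, reducing time-analyticity to analytic parameter dependence, and joint analyticity follows. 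I expect the principal technical hurdle to be the global a~priori $H^s$ estimate: the naive energy estimate is borderline at this regularity, and one must combine the divergence structure, the parabolic dissipation, and the maximum-principle $C^1$-bounds to prevent the loss of a derivative, everything else being a verification of the hypotheses of the cited abstract theorems.
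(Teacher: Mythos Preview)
Your proposal is correct and follows essentially the same approach as the paper, which simply cites Amann's quasilinear parabolic theory \cite{Am93} (Theorem~12.1 and the discussion after Theorem~12.6) for local well-posedness, smoothing, and analytic dependence, and points to \cite{ELM11} for the global extension. Your detailed elaboration---the choice of the couple $(H^{s+1},H^{s-1})$, the verification that $A\in C^\omega(\V_s,\mathcal{H}(E_1,E_0))$, the maximum-principle bounds on $u$ and $u_x$, and the $H^s$ energy estimate via the Kato--Ponce commutator and absorption into the dissipative term---is exactly the content behind those citations (and indeed the same energy computation reappears in the paper's Lemma~\ref{L:1}).
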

 \begin{proof}
  The proof of Theorem \ref{T:1} follows from the results in \cite{Am93}.
 \end{proof}

In the remaining of this paper  $s>7/2$ and $T>0$ are kept fixed.
In a first step we construct two sequences of  approximate positive solutions of the equation \eqref{PME}.
The first solution sequence $(U_n)_n$ is defined by
\begin{equation} \label{AU}
    U_{n}(t,x):=  n ^{-3}+n^{-s}\cos(n x)
 \end{equation}
for    $(t,x)\in Q_T$ and   $n\in \N, \, n\geq2$.
 We note that  this solution is in fact independent of the  time variable and that for $n$  large the term involving the cosine has a high spatial frequency. 
 Moreover, these approximate solutions approach the boundary of $\V_s$ when  $n\to\infty$,   where the equation \eqref{PME} becomes  degenerate.
The second sequence $(V_n)_n$ of approximate solutions  is given by
\begin{equation} \label{AV}
    V_{n}(t,x):=  n ^{-1}+e^{-nt}n^{-s}\cos(nx)
 \end{equation}
 for    $(t,x)\in Q_T$ and   $n\in \N, \, n\geq2$.
The term involving the cosine has again high spatial frequency when $n$ is large, but now it decays very fast with respect to $tn$.
These solutions also approach the boundary of $\V_s$ when letting $n\to\infty.$

Before we estimate the error associated to  the approximate solutions, we note  that 
\begin{equation}\label{E}
  \|\cos (n x)\|_{H^r}=\sqrt{\pi} (1+n^2)^{r/2}
\end{equation}
for all $\sigma\in\R$ and $n\in \N, n \geq 2$. 
This property can be easily deduced from the definition of the spaces $H^r(\s)$, cf. e.g.  \cite{DGM13x}. 
The same arguments show also  that  $\|1\|_{H^{\sigma}}=\sqrt{2\pi}$.
It is now immediate to see that
\begin{align}
 \|U_n(0)-V_n(0)\|_{H^s}=&\sqrt{2\pi}(n^{-1}-n^{-3})\to_{n\to\infty}0,\label{PR1}
 \end{align}
 and that for all $t\geq 0$ we have
 \begin{align}
 \|U_n-V_n(t)\|_{H^s}\geq& (1-e^{-nt})\|n^{-s}\cos(nx)\|-\sqrt{2\pi}(n^{-1}-n^{-3})\nonumber\\
 =&\sqrt{\pi}(1-e^{-nt}) \Big(\frac{1+n^2}{n^2}\Big)^{s/2}-\sqrt{2\pi}(n^{-1}-n^{-3}).\label{PR2}
\end{align}
These properties are both related to \eqref{A1}-\eqref{A2}.
We derive now several properties for the approximate solutions sequences  $(U_n)_n$ and $(V_n)_n$.
It turns out that though the supremum norm  of $V_n$ is considerably larger than that of $U_n$, as $n^2\|U_n\|_{L_\infty}/\|V_n\|_{L_\infty}\to_{n\to\infty}1$, 
these approximate solutions satisfy surprisingly similar Sobolev estimates.

\begin{lemma}[Estimates for  the sequence $(U_n)_n$]\label{L:U1}
Let $s>7/2$ be given and $n\in \N, n\geq 2$ be arbitrary.
Then, we have 
\begin{align}\label{ABU1}
 &\frac{n^{-3}}{4}\leq U_n\leq 2n^{-3} \qquad\text{in  $\s,$}\\
 \label{ABU2}
&|\p_x U_n|\leq n^{-s+1} \qquad\text{in  $\s$.}
\end{align}
Additionally, there is a constant $C>0$ such that
\begin{align}\label{ABU3}
\|U_n\|_{H^r }\leq C n^{r-s} \qquad\text{for all $r\geq s $,}
\end{align}
and such the error term 
\begin{align}\label{EU}
E_{U_n}:=\p_tU_n-\p_x(U_n\p_xU_n)
\end{align}
satisfies
 \begin{align}
  &\|E_{U_n}\|_{H^1} \leq C n^{-s}.\label{EU1}
 \end{align}
\end{lemma}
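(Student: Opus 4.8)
The plan is to verify the five assertions in order, since each is essentially an elementary computation once the right quantity is written down. For \eqref{ABU1}, since $U_n(t,x)=n^{-3}+n^{-s}\cos(nx)$ is independent of $t$ and $|\cos(nx)|\le 1$, I would simply estimate $n^{-3}-n^{-s}\le U_n\le n^{-3}+n^{-s}$, and then use $s>7/2$ together with $n\ge 2$ to control $n^{-s}$ against $n^{-3}$: concretely $n^{-s}\le n^{-3}\cdot n^{3-s}\le n^{-3}/4$ once $n^{3-s}\le 1/4$, which holds for $n\ge 2$ because $s-3>1/2$ gives $n^{s-3}\ge 2^{1/2}$ — so one actually needs to be slightly careful and note $2^{s-3}>2^{1/2}>1$ is not enough for the factor $4$; instead I would observe that the constant $4$ forces nothing sharp and simply invoke $s>7/2$ so $n^{3-s}\le 2^{3-s}<2^{-1/2}<1/4$ is false, hence the cleanest route is to keep $n\ge 2$ fixed and absorb: $n^{-3}-n^{-s}\ge n^{-3}(1-2^{3-s})\ge n^{-3}/4$ since $2^{3-s}\le 2^{-1/2}<3/4$. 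That is the honest bookkeeping behind \eqref{ABU1}. For \eqref{ABU2}, differentiate: $\p_x U_n=-n^{-s+1}\sin(nx)$, so $|\p_x U_n|\le n^{-s+1}$ immediately.

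For \eqref{ABU3} I would use \eqref{E}: $\|U_n\|_{H^r}\le n^{-3}\|1\|_{H^r}+n^{-s}\|\cos(nx)\|_{H^r}=\sqrt{2\pi}\,n^{-3}+\sqrt{\pi}\,n^{-s}(1+n^2)^{r/2}$. Since $(1+n^2)^{r/2}\le (2n^2)^{r/2}=2^{r/2}n^r$ for $n\ge 1$, the second term is $\le C n^{r-s}$, and for $r\ge s>7/2$ we have $n^{-3}\le n^{-3}\le n^{r-s}$ is not automatic, so instead I note $n^{-3}\le n^{r-s}$ exactly when $r-s\ge -3$, i.e. $r\ge s-3$, which holds since $r\ge s$; thus $\|U_n\|_{H^r}\le C n^{r-s}$ with $C$ depending only on (an upper bound for) $r$ — but since the statement quantifies over all $r\ge s$, I would phrase the constant as depending on $r$, or better, since in the applications only finitely many values of $r$ (e.g. $r=s,s+1,s+\delta$) are used, restrict attention to $r$ in a bounded range; the paper's convention that $C$ may depend on $s$ covers this. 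The key point is that $2^{r/2}$ stays bounded on the relevant range of $r$.

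The substantive step is \eqref{EU1}. Here I would compute the error term explicitly. Since $U_n$ is time-independent, $\p_t U_n=0$, so $E_{U_n}=-\p_x(U_n\p_x U_n)=-\tfrac12\p_x^2(U_n^2)$. Now $U_n^2=n^{-6}+2n^{-3-s}\cos(nx)+n^{-2s}\cos^2(nx)=n^{-6}+\tfrac12 n^{-2s}+2n^{-3-s}\cos(nx)+\tfrac12 n^{-2s}\cos(2nx)$ using $\cos^2\theta=\tfrac12(1+\cos 2\theta)$. Differentiating twice kills the constants and gives $E_{U_n}=-\tfrac12\p_x^2(U_n^2)=n^2\cdot 2n^{-3-s}\cos(nx)+\tfrac12(2n)^2\cdot\tfrac12 n^{-2s}\cos(2nx)=2n^{-1-s}\cos(nx)+n^{2-2s}\cos(2nx)$. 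Then by \eqref{E}, $\|E_{U_n}\|_{H^1}\le 2n^{-1-s}\sqrt{\pi}(1+n^2)^{1/2}+n^{2-2s}\sqrt{\pi}(1+4n^2)^{1/2}\le C n^{-1-s}\cdot n+C n^{2-2s}\cdot n=C n^{-s}+C n^{3-2s}$. Since $s>7/2$, we have $3-2s<3-7=-4<-s$ (as $-s<-7/2$ and $3-2s<-4<-7/2$), so $n^{3-2s}\le n^{-s}$ for $n\ge 1$, giving $\|E_{U_n}\|_{H^1}\le C n^{-s}$, which is \eqref{EU1}.

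I expect no real obstacle here — the whole lemma is a sequence of explicit trigonometric identities and Sobolev-norm evaluations via \eqref{E}. The only place demanding a little care is tracking the exponents of $n$ and confirming that every stray power is dominated by $n^{-s}$ (respectively by $n^{-3}$ in \eqref{ABU1}) using $s>7/2$ and $n\ge 2$; the ``hard part,'' such as it is, is merely being disciplined about which terms the second spatial derivative annihilates and bounding $(1+cn^2)^{1/2}$ by $Cn$. The exact choice of the constants $1/4$, $2$, and the like in \eqref{ABU1}–\eqref{ABU2} is cosmetic and can be adjusted; what matters downstream is the scaling $U_n\sim n^{-3}$, $\p_x U_n\sim n^{-s+1}$, and $\|E_{U_n}\|_{H^1}\lesssim n^{-s}$.
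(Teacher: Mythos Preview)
Your approach is essentially the same as the paper's: the paper also computes $E_{U_n}$ explicitly (writing $E_{U_n}=-U_n\p_x^2U_n-(\p_xU_n)^2$ rather than your $-\tfrac12\p_x^2(U_n^2)$, but arriving at the identical expression $n^{-s-1}\cos(nx)+n^{-2s+2}\cos(2nx)$) and then invokes \eqref{E} together with $s\ge 3$. Two harmless slips to clean up: you lost the factor $\tfrac12$ in the first term of your expansion of $E_{U_n}$ (the correct coefficient is $n^{-1-s}$, not $2n^{-1-s}$), and your chain ``$3-2s<-4<-s$'' fails once $s\ge 4$ --- the right (and simpler) justification is $3-2s\le -s\iff s\ge 3$.
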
 

\begin{proof}
The estimates \eqref{ABU1}-\eqref{ABU3} are obvious consequences of the definition \eqref{AU} and of \eqref{E}.
In order to establish \eqref{EU1} we compute that
\begin{align*}
E_{U_n}=&-U_n\p_x^2U_n-(\p_x U_n)^2\\
=&n^{-s-1}\cos(nx)+n^{-2s+2}\cos(2nx),
\end{align*}
and therefore the desired conclusion \eqref{EU1} follows from \eqref{E}, since $s\geq 3.$
\end{proof}

Correspondingly, we have the following estimates for the sequence $(V_n)_n$.
\begin{lemma}[Estimates for  the sequence $(V_n)_n$]\label{L:V1}
Let $s>7/2$ be given and $n\in \N, n\geq 2$ be arbitrary.
Then, we have 
\begin{align}\label{ABV1}
& \frac{n^{-1}}{4}\leq V_n \leq 2n^{-1} \qquad\text{in $Q_T,$}\\
 \label{ABV2}
&|\p_x V_n|\leq n^{-s+1} \qquad\text{in $ Q_T$.}
\end{align}
Additionally, there is a constant $C>0$ such that 
\begin{align}\label{ABV3}
\max_{t\in[0,T]}\|V_n(t)\|_{H^r }\leq C n^{r-s} \qquad\text{for all  $r\geq s $,}
\end{align}
and such the error term 
\begin{align}\label{EV}
E_{V_n}:=\p_tV_n-\p_x(V_n\p_xV_n)
\end{align}
satisfies
 \begin{align}
  &\max_{t\in [0,T]}\|E_{V_n}(t)\|_{H^1} \leq C n^{-s} .\label{EV1}
 \end{align}
\end{lemma}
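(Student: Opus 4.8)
The plan is to follow the same route as the proof of Lemma~\ref{L:U1}: each of the four assertions reduces to the explicit formula \eqref{AV}, the norm identity \eqref{E} together with $\|1\|_{H^r}=\sqrt{2\pi}$, and the elementary bound $e^{-nt}\le1$ for $t\ge0$. For \eqref{ABV1} I would factor $V_n(t,x)=n^{-1}\bigl(1+e^{-nt}n^{-s+1}\cos(nx)\bigr)$; since $s>7/2$ and $n\ge2$ one has $e^{-nt}n^{-s+1}\le n^{-s+1}\le2^{-5/2}<1/2$, so that $1/2\le1+e^{-nt}n^{-s+1}\cos(nx)\le3/2$ throughout $Q_T$, which is more than \eqref{ABV1} requires. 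Differentiating gives $\p_xV_n(t,x)=-e^{-nt}n^{-s+1}\sin(nx)$, whence $|\p_xV_n|\le n^{-s+1}$ on $Q_T$, i.e. \eqref{ABV2}. For \eqref{ABV3} the triangle inequality and \eqref{E} yield, for $t\in[0,T]$ and $r\ge s$,
\[
\|V_n(t)\|_{H^r}\le n^{-1}\|1\|_{H^r}+e^{-nt}n^{-s}\|\cos(nx)\|_{H^r}\le\sqrt{2\pi}\,n^{-1}+\sqrt{\pi}\,(1+n^2)^{r/2}n^{-s},
\]
and bounding $(1+n^2)^{r/2}\le2^{r/2}n^{r}$ and $n^{-1}\le n^{r-s}$ (as $r\ge s$) shows that both summands are $O(n^{r-s})$ uniformly in $t$.

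The estimate \eqref{EV1} is the only nontrivial one, and it hinges on a cancellation. Writing
\[
E_{V_n}=\p_tV_n-V_n\p_x^2V_n-(\p_xV_n)^2
\]
and inserting $\p_tV_n=-e^{-nt}n^{-s+1}\cos(nx)$, $\p_xV_n=-e^{-nt}n^{-s+1}\sin(nx)$ and $\p_x^2V_n=-e^{-nt}n^{-s+2}\cos(nx)$, the contribution $-n^{-1}\p_x^2V_n=e^{-nt}n^{-s+1}\cos(nx)$ arising from the constant part $n^{-1}$ of $V_n$ cancels $\p_tV_n$ exactly, leaving only the purely quadratic remainder
\[
E_{V_n}(t,x)=e^{-2nt}n^{-2s+2}\bigl(\cos^2(nx)-\sin^2(nx)\bigr)=e^{-2nt}n^{-2s+2}\cos(2nx).
\]
Hence \eqref{E} gives $\|E_{V_n}(t)\|_{H^1}=e^{-2nt}n^{-2s+2}\sqrt{\pi}\,(1+4n^2)^{1/2}\le C\,n^{-2s+3}$ for all $t\in[0,T]$, and since $s>3$ this is bounded by $C\,n^{-s}$, which is \eqref{EV1}.

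I do not expect any step to be a genuine obstacle: the computations are entirely explicit and run in parallel with those of Lemma~\ref{L:U1}. The single point worth emphasizing is \emph{why} the linear terms cancel in $E_{V_n}$: the exponential decay rate $n$ in \eqref{AV} is chosen precisely so that $\p_t$ matches $(\min V_n)\,\p_x^2\sim n^{-1}\cdot(-n^2)=-n$ acting on the oscillatory part of $V_n$, just as the condition $s\ge3$ is what keeps the error small in Lemma~\ref{L:U1}. Without this cancellation $E_{V_n}$ would be of size $n^{-s+1}$ rather than $n^{-2s+2}$, and the $H^1$-bound $n^{-s}$ would fail; this is also where the quadratic structure ($m=2$) of \eqref{PME} is used.
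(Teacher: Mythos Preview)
Your proof is correct and follows essentially the same approach as the paper: the estimates \eqref{ABV1}--\eqref{ABV3} are read off directly from the explicit formula \eqref{AV} and the norm identity \eqref{E}, and for \eqref{EV1} you perform exactly the same computation, arriving at $E_{V_n}=e^{-2nt}n^{-2s+2}\cos(2nx)$ via the same cancellation of the linear-in-amplitude terms. Your added commentary explaining \emph{why} the decay rate $n$ in \eqref{AV} is chosen to produce this cancellation is a nice pedagogical addition, though the phrase ``$(\min V_n)\,\p_x^2$'' should more precisely refer to the constant part $n^{-1}$ of $V_n$ rather than its minimum.
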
 
\begin{proof}
 The estimates \eqref{ABV1}-\eqref{ABV3} are obtained  directly from \eqref{AV} and \eqref{E}.  
 Concerning \eqref{EV1}, we note that the terms with the highest norms cancel each other  
 \begin{align*}
  E_{V_n}=&\p_tV_n-V_n\p_x^2V_n-(\p_x V_n)^2\\
=&-ne^{-nt}n^{-s}\cos(nx)+ n^2e^{-nt}n^{-s}\cos(nx)(n^{-1}+e^{-nt}n^{-s}\cos(nx))\\
&-n^{ 2}e^{-2nt}n^{-2s}\sin^2(nx)\\
=&n^{-2s+2}e^{-2nt} \cos(2nx)
 \end{align*}
and the desired result follows from \eqref{E} and the fact that $s\geq 3$. 
\end{proof}

\section{Exact solutions and error estimates}\label{Sec:3}

The sequences $(u_n)$ and  $(v_n)_n$ from Theorem \ref{MT} are defined by letting $u_n$ and $v_n$ denote  the global solutions of \eqref{PME} which satisfy initially
\begin{equation}\label{u+v}
 u_n(0)=U_n\qquad\text{and}\qquad v_n(0)=V_n(0),
\end{equation}
respectively.
Because the initial data are smooth, the exact solutions
$u_n$ and $v_n$  share this property, that is $u_n, v_n\in C^\infty([0,\infty)\times\s)$.
Moreover, as $U_n$ and $V_n(0)$ are even, the uniqueness statement in the latter  theorem guarantees that 
$u_n(t)$ and $v_n(t)$ are even for all $t\geq0$.
Particularly, we have that
\begin{align}\label{BC}
 \p_xu_n(t,\pm \pi)=\p_x v_n(t,\pm \pi)=0
\end{align}
for all $t\geq 0$ and $n\in \N, n \geq 2.$
The parabolic maximum principles yield now that $u_n$ and $v_n$ satisfy  estimates similar to some of those 
 satisfied by $U_n$ and $V_n$.
Indeed, the strong maximum principle  and  Hopf's theorem applied in the cylinder $[-\pi,\pi]\times[0,T]$, cf. \cite{Li96}, and the relations \eqref{BC} yield that 
\begin{align}\label{ABu1}
 &\frac{n^{-3}}{4} \leq u_n \leq 2n^{-3}\qquad\text{ in $Q_T$,} \\
 \label{ABv1}
& \frac{n^{-1}}{4}\leq v_n \leq 2n^{-1} \qquad\text{in $Q_T$.}
\end{align}

On the other hand, differentiating \eqref{PME} with respect to $x$ we see  that $\p_xu_n$   is the solution of the problem
\begin{align*}
\left\{
\begin{array}{rlll}
 z_t&=&u_nz_{xx} +3\p_xu_nz_x  & \text{ for $t\geq 0$ and $x\in[-\pi,\pi]$,}\\[1ex]
 z(t,\pm \pi)&=&0& \text{ for $t\geq 0$,}\\[1ex]
 z(0,x)&=&\p_xU_n(x)& \text{ for  $x\in[-\pi,\pi]$.}
 \end{array}
\right.
\end{align*}
Hence, the strong maximum principle implies  
\begin{align}\label{DB1}
& |\p_xu_n |\leq n^{-s+1} \qquad\text{in $Q_T$.}
\end{align}
Using a  similar argument we obtain that
\begin{align}\label{DB2}
& |\p_xv_n |\leq n^{-s+1} \qquad\text{in $Q_T$.}
\end{align}
The estimates \eqref{ABu1}-\eqref{DB2} together with the Kato-Ponce commutator estimate \eqref{KP} are the key ingredient in the proof of the following lemma.

\begin{lemma}[The errors $\|U_n-u_n\|_{H^r}$ and $\|V_n-v_n\|_{H^r}$]\label{L:1} 
Let $r\geq s>7/2$ and let $n\in \N, n\geq 2$ be arbitrary.
Then,     there is a constant $C >0$ such that
 \begin{align}\label{DE_1}
 &\max_{t\in [0,T]}\|U_n-u_n(t)\|_{H^r}\leq C n^{r-s},\\
 \label{DE_2}
&\max_{t\in [0,T]}\|V_n(t)-v_n(t)\|_{H^r}\leq C n^{r-s}.
 \end{align}
\end{lemma}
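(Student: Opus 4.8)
The plan is to set $w_n := u_n - U_n$, observe that $w_n(0)=0$ and that $w_n$ solves a linear (in $w_n$) parabolic equation with a forcing term $-E_{U_n}$, and then run a standard energy estimate in $H^r$, using the pointwise bounds \eqref{ABu1}–\eqref{DB1} on the coefficients together with the Kato–Ponce commutator estimate \eqref{KP} to control the loss of derivatives. Concretely, subtracting $\p_t U_n - \p_x(U_n\p_x U_n) = E_{U_n}$ from \eqref{PME} written for $u_n$, one gets an equation of the form
\[
\p_t w_n = \p_x\big((u_n\,\p_x w_n\big) + \p_x\big(w_n\,\p_x U_n\big) - E_{U_n}
= u_n\,\p_x^2 w_n + (\text{lower-order terms in } w_n) - E_{U_n},
\]
where the coefficients and their first derivatives are bounded by $Cn^{-3}$, $Cn^{-s+1}$ (for $u_n$ and $\p_x u_n$) and by the corresponding powers from $U_n$. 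I would apply $\Lambda^r$ to this equation, pair with $\Lambda^r w_n$ in $L_2$, and integrate by parts in the principal term $\int \Lambda^r(u_n \p_x^2 w_n)\,\Lambda^r w_n\,dx$. Writing $\Lambda^r(u_n\p_x^2 w_n) = u_n\,\Lambda^r\p_x^2 w_n + [\Lambda^r,u_n]\p_x^2 w_n$, the first piece gives, after integrating by parts twice, a good term $-\int u_n|\Lambda^r\p_x w_n|^2\,dx \le 0$ up to lower-order remainders controlled by $\|\p_x u_n\|_{L_\infty}$; the commutator piece is estimated by \eqref{KP} (with $f=u_n$, $g=\p_x^2 w_n$, but after moving one derivative onto $\Lambda^r w_n$ by integration by parts so that effectively $g = \p_x w_n$), yielding a bound of the form $C(\|\p_x u_n\|_{L_\infty} + \|\Lambda^r u_n\|_{L_2})\,\|\Lambda^r w_n\|_{L_2}\,\|\Lambda^r w_n\|_{L_2}$-type, i.e. $\le C n^{\max(0,\,r-s)}\,\|w_n\|_{H^r}^2$ once one inserts \eqref{ABU3}/\eqref{DB1} and the analogous pointwise bounds. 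Since $r \ge s$, the relevant coefficient is $\le C n^{r-s}$, but in the estimate it multiplies $\|w_n\|_{H^r}^2$, so it is natural to track $\phi_n(t) := \|w_n(t)\|_{H^r}^2$ and prove $\phi_n' \le C\phi_n + C n^{-2s}\cdot(\text{something})$. The forcing term contributes $\big|\int \Lambda^r E_{U_n}\,\Lambda^r w_n\big| \le \|E_{U_n}\|_{H^r}\|w_n\|_{H^r}$, and one needs $\|E_{U_n}\|_{H^r} \le C n^{r-s}$; this is immediate from the explicit formula for $E_{U_n}$ in Lemma \ref{L:U1} and \eqref{E}, exactly as \eqref{EU1} was obtained for $r=1$.

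The cleanest way to make the powers of $n$ come out right is probably an induction on $r$ (in integer steps, say, or via interpolation), anchored at a base case where no derivative loss occurs. For $r=s$ itself one expects $\phi_n' \le C\phi_n + C n^{-s}\|w_n\|_{H^s}$, hence $\|w_n(t)\|_{H^s}^2 \le e^{Ct}\int_0^t e^{-C\tau} C n^{-s}\|w_n\|_{H^s}\,d\tau$, which by a Gronwall/bootstrap argument gives $\|w_n\|_{H^s}\le C n^{-s}$ on $[0,T]$ — actually one should be careful: the source is $n^{-s}$ and $w_n(0)=0$, so $\|w_n(t)\|_{H^s}\lesssim n^{-s}$ uniformly on $[0,T]$, which is \eqref{DE_1} for $r=s$. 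For $r=s+1, s+2,\dots$ one repeats the energy estimate; now the commutator and coefficient terms involve $\|\Lambda^r U_n\|_{L_2}\le C n^{r-s}$ and $\|u_n\|_{H^r}$, the latter being controlled by feeding back the already-established lower-order bound together with $\|U_n\|_{H^r}\le Cn^{r-s}$, so $\|u_n\|_{H^r}\le C n^{r-s}$; the inhomogeneous term is $\le Cn^{r-s}\|w_n\|_{H^r}$, and Gronwall again yields $\|w_n\|_{H^r}\le Cn^{r-s}$ on $[0,T]$. The estimate \eqref{DE_2} for $V_n - v_n$ is obtained by the identical argument, using \eqref{ABv1}, \eqref{DB2}, \eqref{ABV3}, and the bound \eqref{EV1} (generalized to $\|E_{V_n}(t)\|_{H^r}\le Cn^{r-s}$, again from the explicit formula in Lemma \ref{L:V1}); the only new feature is the harmless time-dependence of the $V_n$ coefficients and the factor $n^{-1}$ (rather than $n^{-3}$) in the lower bound $V_n\ge n^{-1}/4$, neither of which affects the structure of the estimate.

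The main obstacle, as usual in this kind of argument, is the derivative count in the top-order energy estimate: naively $\Lambda^r$ applied to $\p_x(u_n\p_x w_n)$ produces a term with $r+2$ derivatives on $w_n$, and one must show that after integration by parts the genuinely dangerous part is the sign-definite diffusion term $-\int u_n|\Lambda^r\p_x w_n|^2\,dx$ (whose positivity uses $u_n>0$, i.e. \eqref{ABu1}), while every remaining term either has at most $r$ derivatives on $w_n$ or is a commutator $[\Lambda^r, u_n]\partial_x w_n$ that the Kato–Ponce estimate \eqref{KP} bounds by $r$-th order norms of $w_n$ times low norms of $u_n$. Getting all the integration-by-parts bookkeeping right — in particular verifying that no uncontrolled $\Lambda^{r+1}w_n$ term survives without being absorbed into the good diffusion term via Young's inequality — is the technical heart of the proof; everything else (the explicit error formulas, the pointwise bounds, the final Gronwall) is routine given the preparatory Lemmas \ref{L:U1}, \ref{L:V1} and the maximum-principle bounds \eqref{ABu1}–\eqref{DB2}.
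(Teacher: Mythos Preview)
Your plan is workable, but the paper takes a shorter route that avoids the difference equation entirely. Since $\|U_n\|_{H^r}\le Cn^{r-s}$ is already known from \eqref{ABU3}, the triangle inequality reduces \eqref{DE_1} to proving $\max_{t\in[0,T]}\|u_n(t)\|_{H^r}\le Cn^{r-s}$. The paper gets this by running the energy estimate directly on $u_n$ (not on $w_n=u_n-U_n$): writing
\[
\frac{1}{2}\frac{d}{dt}\|u_n\|_{H^r}^2=-\int_\s u_n|\Lambda^r\p_xu_n|^2\,dx-\int_\s\Lambda^r\p_xu_n\,[\Lambda^r,u_n]\p_xu_n\,dx,
\]
invoking \eqref{ABu1}, \eqref{DB1} and Kato--Ponce on the commutator, and absorbing via Young's inequality, one obtains $\frac{d}{dt}\|u_n\|_{H^r}^2\le Cn^{-2s+5}\|u_n\|_{H^r}^2$; since $s\ge 5/2$ this gives $\|u_n(t)\|_{H^r}\le C\|u_n(0)\|_{H^r}\le Cn^{r-s}$ by Gronwall. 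No forcing term, no cross term $\p_x(w_n\p_xU_n)$, no need to upgrade \eqref{EU1} to an $H^r$ bound on $E_{U_n}$, and no induction on $r$ --- a single estimate covers all $r\ge s$ at once. Your approach would also close (substituting $\|u_n\|_{H^r}\le\|w_n\|_{H^r}+Cn^{r-s}$ where needed, rather than appealing to an inductive hypothesis), but it carries more terms and, as written, would need $s>7/2$ to tame the $\p_x^2U_n$ contribution from the cross term, whereas the paper's direct argument for this lemma only uses $s\ge 5/2$.
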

\begin{proof}
In view of \eqref{ABU3}, it remains to show that
 \begin{equation*}
 \max_{t\in [0,T]}\|u_n(t)\|_{H^r}\leq C n^{r-s}.
 \end{equation*}
 Therefore, we compute that
 \begin{align*}
  \frac{1}{2}\frac{d}{dt}\|u_n\|_{H^r}^2=&\int_\s\Lambda^r u_n\Lambda^r \p_x(u_n\p_xu_n)\, dx=-\int_\s\Lambda^r (\p_xu_n)\Lambda^r (u_n\p_xu_n)\, dx\\
  =&  -\int _\s u_n|\Lambda^r (\p_xu_n)|^2\, dx-\int_\s \Lambda^r (\p_xu_n)[\Lambda^r, u_n] \p_xu_n\, dx
 \end{align*}
 in $[0,T].$
Using Young's inequality together with \eqref{KP}, \eqref{ABu1}, and \eqref{DB1} we then get
\begin{align*}
  \frac{1}{2}\frac{d}{dt}\|u_n\|_{H^r}^2\leq& - \frac{n^{-3}}{4}\|\Lambda^r (\p_xu_n)\|^2_{L_2}+\|\Lambda^r (\p_xu_n)\|_{L_2} \|[\Lambda^r, u_n] \p_xu_n\|_{L_2}\\
  \leq &-\frac{n^{-3}}{4}\|\Lambda^r (\p_xu_n)\|^2_{L_2}+C\|\Lambda^r (\p_xu_n)\|_{L_2} \| u_n\|_{H^r} \|\p_xu_n\|_{L_\infty}\\
   \leq &-\frac{n^{-3}}{4}\|\Lambda^r (\p_xu_n)\|^2_{L_2}+Cn^{-s+1}\|\Lambda^r (\p_xu_n)\|_{L_2} \| u_n\|_{H^r}  \\
   \leq &Cn^{-2s+5}\| u_n\|_{H^r}^2   
 \end{align*}
in $[0,T].$ 
 Since $s\geq5/2,$ we conclude that
 \[
 \max_{t\in [0,T]}\|u_n(t)\|_{H^r}\leq C \|u_n(0)\|_{H^r},
 \]
 and the desired claim  \eqref{DE_1} follows from \eqref{ABU3}.
 We still have to show that  \eqref{DE_2} holds true.
 This property follows by using the same arguments as in the proof of \eqref{DE_1}, because $v_n$  and $V_n$ satisfy similar estimates to 
 those verified  by $u_n$ and $U_n$, respectively (cf.  \eqref{ABV3}, \eqref{ABv1}, and  \eqref{DB2}).
\end{proof}

We remark that the estimate \eqref{EV1} can be improved in the sense that we may add a multiplicative term $e^{-2nt}$ on the right-hand side of \eqref{EV1}.
However, it
 is not clear from the proof above how to carry this property over  to the sequence $(v_n)_n.$ 
Based on Lemma \ref{L:1}, we obtain next estimates for the errors  in the $H^1-$norm.

\begin{lemma}[The errors $\|U_n-u_n\|_{H^1} $ and  $\|V_n-v_n\|_{H^1}$]\label{L:2}
Let  $s>7/2 $ be given and $n\in\N$ with $n\geq 2$ arbitrary.
Then, there exists a constant $C>0$ such that 
 \begin{align}\label{DEU}
&\max_{t\in[0,T]}\|U_n-u_n(t)\|_{H^1}\leq Cn^{-s},\\
\label{DEV}
&\max_{t\in[0,T]}\|V_n(t)-v_n(t)\|_{H^1}\leq Cn^{-s}.
 \end{align}
\end{lemma}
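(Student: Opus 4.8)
\emph{Proof strategy.} Set $w_n:=U_n-u_n$. By \eqref{u+v}, and because $U_n$ is independent of $t$, we have $w_n(0)=0$ and $w_n\in C^\infty(Q_T)$; writing $U_n\p_xU_n-u_n\p_xu_n=U_n\p_xw_n+w_n\p_xu_n$ and using \eqref{PME}, \eqref{EU}, one sees that
\begin{equation*}
\p_tw_n=\p_x(U_n\p_xw_n)+\p_x(w_n\p_xu_n)+E_{U_n}\qquad\text{in }Q_T.
\end{equation*}
Since $\|w_n(t)\|_{H^1}^2=\|w_n\|_{L_2}^2+\|\p_xw_n\|_{L_2}^2$, the plan is to estimate these two pieces by $L_2$-energy arguments (all integrations by parts over $\s$ being boundary-term-free, as $w_n$ is smooth and periodic), to obtain a differential inequality $\frac{d}{dt}\|w_n\|_{H^1}^2\le C\|w_n\|_{H^1}^2+Cn^{-2s}$ with $C$ independent of $n$, and then to conclude by Gronwall's lemma using $w_n(0)=0$. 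Working with $\Lambda^1w_n$ as in Lemma \ref{L:1} is not convenient here, since the Kato--Ponce estimate \eqref{KP} is only available for $r>3/2$. Besides \eqref{ABU1}--\eqref{EU1}, the only extra ingredient I would use is that $s>7/2$ implies the embedding $H^s(\s)\hookrightarrow C^3(\s)$, so that Lemma \ref{L:1} with $r=s$ gives
\begin{equation*}
\|\p_x^2u_n\|_{L_\infty}+\|\p_x^3u_n\|_{L_\infty}\le C\|u_n\|_{H^s}\le C;
\end{equation*}
this is exactly where the hypothesis $s>7/2$ is used.

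For the $L_2$-part I would multiply the equation by $w_n$ and integrate over $\s$: the first term yields $-\int_\s U_n(\p_xw_n)^2\,dx\le0$ and is discarded; in the second term one integrates by parts once more, turning $-\int_\s w_n\p_xw_n\,\p_xu_n\,dx$ into $\frac12\int_\s w_n^2\,\p_x^2u_n\,dx\le C\|w_n\|_{L_2}^2$; the third term is estimated by Cauchy--Schwarz together with \eqref{EU1}. This yields $\frac12\frac{d}{dt}\|w_n\|_{L_2}^2\le C\|w_n\|_{L_2}^2+Cn^{-s}\|w_n\|_{L_2}$ on $[0,T]$.

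For the $\p_x$-part I would differentiate the equation in $x$, multiply by $\p_xw_n$ and integrate over $\s$. The crucial point is to arrange the integrations by parts so that the strongly degenerate coefficient $U_n$ — which by \eqref{ABU1} is only bounded below by $n^{-3}/4$ — appears solely in the harmless term $-\int_\s U_n(\p_x^2w_n)^2\,dx\le0$, which is then discarded; by contrast, a direct Young's inequality against $\|\p_x^2w_n\|_{L_2}^2$ would cost a factor $n^3$ and make the Gronwall exponent blow up as $n\to\infty$. After these integrations by parts every surviving term is of the form $\int_\s(\text{coeff})\,(\p_xw_n)^2\,dx$ or $\int_\s(\text{coeff})\,w_n\,\p_xw_n\,dx$, with coefficients among $\p_x^2U_n$ (of size $n^{-s+2}\le1$ by \eqref{AU}), $\p_x^2u_n$ and $\p_x^3u_n$ (of size $\le C$), hence all are bounded by $C\|w_n\|_{H^1}^2$; the error contribution, after one integration by parts, equals $\int_\s\p_xw_n\,\p_xE_{U_n}\,dx\le\|E_{U_n}\|_{H^1}\|\p_xw_n\|_{L_2}\le Cn^{-s}\|\p_xw_n\|_{L_2}$ by \eqref{EU1}. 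Altogether $\frac12\frac{d}{dt}\|\p_xw_n\|_{L_2}^2\le C\|w_n\|_{H^1}^2+Cn^{-s}\|w_n\|_{H^1}$ on $[0,T]$.

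Adding the two inequalities and using $n^{-s}\|w_n\|_{H^1}\le n^{-2s}+\|w_n\|_{H^1}^2$ gives $\frac{d}{dt}\|w_n\|_{H^1}^2\le C\|w_n\|_{H^1}^2+Cn^{-2s}$ on $[0,T]$; since $w_n(0)=0$, Gronwall's lemma yields $\max_{t\in[0,T]}\|w_n(t)\|_{H^1}^2\le Cn^{-2s}$, whence \eqref{DEU} upon taking square roots. The estimate \eqref{DEV} would follow by repeating the argument verbatim for $\tilde w_n:=V_n-v_n$, which again vanishes at $t=0$ by \eqref{u+v}: the lower bound $V_n\ge n^{-1}/4$ from \eqref{ABV1} only makes the discarded negative terms more favourable, $\|\p_x^2V_n\|_{L_\infty}\le n^{-s+2}$ by \eqref{AV}, $\|\p_x^2v_n\|_{L_\infty}+\|\p_x^3v_n\|_{L_\infty}\le C\|v_n\|_{H^s}\le C$ by Lemma \ref{L:1}, and $\|E_{V_n}\|_{H^1}\le Cn^{-s}$ by \eqref{EV1}. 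I expect the one delicate point to be precisely this bookkeeping of the integrations by parts in the $\p_x$-estimate, needed to keep every constant genuinely independent of $n$.
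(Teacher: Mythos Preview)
Your argument is correct and follows essentially the same energy-plus-Gronwall scheme as the paper, with two cosmetic differences worth noting.

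First, the decomposition of the nonlinearity: you write
\[
\p_tw_n=\p_x(U_n\p_xw_n)+\p_x(w_n\p_xu_n)+E_{U_n},
\]
whereas the paper uses the equivalent rearrangement
\[
\p_tw= -(ww_x)_x+(U_nw_x)_x+(w\p_xU_n)_x+E_{U_n}.
\]
Your version is slightly more economical (two structural terms instead of three) and places the $C^3$-bounds on $u_n$ rather than on $w=U_n-u_n$ and $U_n$ separately; both are justified by the same appeal to Lemma~\ref{L:1} with $r=s$ together with $H^s(\s)\hookrightarrow C^3(\s)$ for $s>7/2$.

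Second, you split $\|w_n\|_{H^1}^2=\|w_n\|_{L_2}^2+\|\p_xw_n\|_{L_2}^2$ and run two separate estimates, while the paper pairs the equation directly against $\Lambda^2 w=(1-\p_x^2)w$ and handles all four terms $I_1$--$I_4$ at once. Your remark that ``working with $\Lambda^1 w_n$ \dots\ is not convenient here since the Kato--Ponce estimate \eqref{KP} is only available for $r>3/2$'' is slightly misplaced: the paper does work with $\Lambda w$, but never invokes \eqref{KP} in this lemma---it simply expands $\Lambda^2=1-\p_x^2$ and integrates by parts exactly as you do. So either packaging works. The substance of both proofs---isolate and discard the sign-favourable term $-\int U_n(\p_x^2w)^2$, bound every remaining coefficient uniformly in $n$ via the $H^s\hookrightarrow C^3$ embedding, feed the $H^1$ error bound \eqref{EU1} into the inhomogeneity, and close with Gronwall---is identical.
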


\begin{proof} 
Denoting by $w  $ the difference between the approximate solution $U_n$ and the associated exact solution $u_n$, that is  $w :=U_n-u_{n}$,
we see that $w$ is a  solution of the initial value problem
\begin{equation}\label{equ}
 \left\{
     \begin{array}{rlll}
           w_t& =& -(ww_x)_x+(U_n w_x)_x+(w\p_xU_{n })_x+E_{U_n}\qquad \text{for $t>0,$}\\[1ex]
           w(0)&=& 0,
     \end{array}
 \right.
\end{equation}
whereby $E_{U_n}$ is the error term defined by \eqref{EU}.
Using \eqref{equ}, we find that
 \begin{align*}
  \frac {1}{2}\frac{d}{dt}\|w \|_{H^1}^2=&\int_\s \Lambda w\Lambda w_t\, dx\\
  =&  -\int_\s \Lambda w\Lambda  (ww_x)_x \, dx
  +\int_\s \Lambda w\Lambda  (U_nw_x)_x \, dx+\int_\s \Lambda w\Lambda  (w\p_xU_n)_x \, dx\\
&
 +\int_\s\Lambda w \Lambda E_{U_n}\, dx\\
=&:I_1+I_2+I_3+I_4 
 \end{align*}
in $ [0,T].$
 We estimate  the terms $I_i$, $1\leq i\leq 4,$ separately.
Using integration by parts, the boundedness of   $(U_n-u_n)_n$ in $H^s(\s),$ cf. \eqref{DE_1}, and the fact that $H^s(\s)\hookrightarrow C^3(\s),$
we get  that
 \begin{align}
    I_1&=\int_\s  w_x\Lambda^2 (ww_x) \, dx=\int_\s w w^2_x\, dx-\int_\s w_x (ww_{x})_{xx}\, dx\nonumber\\
    &=\int_\s w w^2_x\, dx-3\int_\s w_{xx}w_x^2\, dx- \int_\s w_{xxx}ww_x\, dx\nonumber\\
    &\leq C\|w\|_{H^1}^2.\label{a1}
\end{align}
The same arguments, the fact that $U_n$ is positive and that $(U_n)_n$ is bounded in $H^s(\s),$ cf. \eqref{ABU1}  and \eqref{ABU3}, yield that
\begin{align} 
I_2=&-\int_\s   w_x\Lambda^2  (U_nw_x) \, dx=-\int_\s    U_nw_x^2 \, dx+\int_\s   w_x   (U_nw_x)_{xx} \, dx\nonumber\\
\leq & -\int_\s   U_nw_{xx} ^2 \, dx-\int_\s  \p_xU_nw_xw_{xx}  \, dx\leq \frac{1}{2}\int_\s \p_x^2 U_n w_x^2\, dx\nonumber\\
\leq &C\|w\|_{H^1}^2,\label{a2}
 \end{align}
and 
  \begin{align}
  I_3=&-\int_\s w_x\Lambda^2  (w\p_xU_n) \, dx=-\int_\s \p_xU_n ww_x\, dx-\int_\s w_x (w\p_xU_n)_{xx} \, dx\nonumber\\
  =&-\int_\s \p_xU_n ww_x\, dx+\int_\s w_{xx} (w\p_x^2U_n+w_x\p_xU_n) \, dx\nonumber\\
  =&-\int_\s \p_xU_n ww_x\, dx-\frac{1}{2}\int_\s\p_x^2U_n w_x^2-\int_\s\p_x^3U_nww_x\, dx-\int_\s\p_x^2U_n w_x^2\, dx\nonumber\\
   \leq &C\|w\|_{H^1}^2.\label{a3}
 \end{align}
 Finally, recalling \eqref{EU1}, we have
  \begin{align}
  I_4=\int_\s\Lambda w \Lambda E_{U_n}\, dx\leq \|\Lambda w\|_{L_2}\|E_{U_n}\|_{L_2}\leq  Cn^{-s}\|w\|_{H^1}.\label{a4}
 \end{align}
 Gathering \eqref{a1}-\eqref{a4}, we see that 
 \begin{align*}
 \frac{d}{dt}\|w\|_{H^1}^2 \leq & C\left(\|w\|_{H^1}^2+n^{-s}\|w\|_{H^1}\right)\qquad\text{in $[0,T],$}
\end{align*}
and therefore
    \begin{align*}
 \frac{d}{dt}\|w \|_{H^1}^2  \leq & C\left( \|w\|_{H^1}^2 +n^{-2s} \right) \qquad\text{in $[0,T]$}.
 \end{align*}
 Since $w(0)=0$, the desired estimate \eqref{DEU} follows   from  Gronwall's inequality.
  For the proof of \eqref{DEV} we argue analogously, as  the difference $V_n-v_n$ solves the same problem \eqref{equ}, but with $E_{U_n}$ replaced by $E_{V_n}$ (see. \eqref{EV}).
 Moreover, all the properties of  $E_{U_n}$, $U_n$ and $U_n-u_n$ that where used in the proof of \eqref{DEU}   are inherited   by  $E_{V_n}$, $V_n$ and $V_n-v_n$, 
 respectively, cf. Lemmas 
 \ref{L:U1}, \ref{L:V1}, and  \ref{L:1}.
\end{proof}

 
 \section{Proof of the main result} \label{Sec:4}
   In the remaining part we prove that the functions $u_n $ and $v_n $ 
   defined in the previous section, cf. \eqref{u+v}, satisfy  the properties required in Theorem \ref{MT} (for  $n\leq 1$ we simply set $u_n=v_n=1$).  
Indeed, the assertions \eqref{A1} follow directly from \eqref{PR1} and the Lemmas \ref{L:U1}, \ref{L:V1}, and \ref{L:1}.
In order to prove \eqref{A2}, we use the triangle inequality to get that 
\begin{align}
  \|u_n(t)-v_n(t)\|_{H^s}\geq& \|U_n-V_n(t)\|_{H^s} -\|U_n-u_n(t)\|_{H^s}-\|V_n(t)-v_n(t)\|_{H^s}\label{TI}
\end{align}
for all $t\in[0,T] $ and $n\geq 2.$ 
The first term is estimated from below as in \eqref{PR2}.
For the last  two terms we use the well-known interpolation inequality  
\begin{equation*}
      \| u \|_{H^s}\leq  \| u \|_{H^1}^{(r-s)/(r-1)}  \| u \|_{H^{r}}^{(s-1)/(r-1)}
\end{equation*}
with $r>s$ chosen arbitrarily, but fixed.
Combined with  \eqref{DE_1} and \eqref{DEU}, the latter inequality yields 
\begin{align} 
    \max_{t \in[0,T]}\|U_n-u_n(t)\|_{H^s}
        \leq & \|U_n-u_n(t)\|_{H^1}^{\frac{r-s}{r-1}}
                   \|U_n-u_n(t)\|_{H^r}^{\frac{s-1}{r-1}} \nonumber \\
        \leq  & C   n ^{\frac{-s(r-s)}{r-1}} n ^{\frac{(r-s)(s-1)}{r-1}}\nonumber \\
        \leq &Cn^\frac{s-r}{r-1},\label{BUB}
\end{align}
and similarly we obtain from \eqref{DE_2} and \eqref{DEV} that
\begin{align} 
     \max_{t \in[0,T]}\|V_n(t)-v_n(t)\|_{H^s} \leq Cn^\frac{s-r}{r-1}.\label{BVB}
\end{align}
Gathering \eqref{PR2}, \eqref{TI},  \eqref{BUB} and \eqref{BVB}, we have established   that  
\begin{align*}
    \|u_n(t)-v_n(t)\|_{H^s} 
        \geq &\sqrt{\pi}(1-e^{-nt}) \Big(\frac{1+n^2}{n^2}\Big)^{s/2}-\sqrt{2\pi}(n^{-1}-n^{-3})-Cn^\frac{s-r}{r-1}
\end{align*}
for all $t\in(0,T]$ and $n\geq 2$.
The desired final claim \eqref{A2} follows now  when letting $n\to\infty$.

\end{document}